\title[{%
  Pointed Convex Sets and Rational Supporting Halfspaces
}]{%
  Pointed Closed Convex Sets are the Intersection\\
  of All Rational Supporting Closed Halfspaces
}
\author{Marcel K. de Carli Silva}
\address[Marcel K.\ de Carli Silva]{%
  Instituto de Matemática e Estatística, %
  Universidade de São Paulo
}
\email{mksilva@ime.usp.br}
\thanks{%
  Research of the first author was supported in part by FAPESP
  (Proc.~2013/03447-6), CNPq (Proc.~477203/2012-4), CNPq
  (Proc.~456792/2014-7), and CAPES%
}
\author{Levent Tunçel}
\address[Levent Tunçel]{%
  Department of Combinatorics and Optimization, University of Waterloo%
}
\email{ltuncel@uwaterloo.ca}
\thanks{%
  Research of the second author was supported in part by Discovery
  Grants from NSERC and by U.S.~Office of Naval Research under award
  numbers N00014-15-1-2171 and~N00014-18-1-2078.%
}
\date{February 8, 2018}
\begin{document}

\begin{abstract}
  We prove that every pointed closed convex set in~\(\Reals^n\) is the
  intersection of all the rational closed halfspaces that contain~it.
  This generalizes a previous result by the authors for compact convex
  sets.
\end{abstract}

\maketitle

A \emph{rational closed halfspace} is a subset of~\(\Reals^n\) of the
form \(\setst{x \in \Reals^n}{\iprod{a}{x} \leq \beta}\) for some \(a
\in \Rationals^n \setminus \set{0}\) and \(\beta \in \Rationals\).
In~\cite[Theorem~8]{CarliT18a} it is proved that every compact convex
set is the intersection of all the rational closed halfspaces that
contain~it.  In that paper, this result was a key step in generalizing
the polyhedral notion of total dual integrality
(see~\cite{Schrijver86a,Schrijver03a}) to more general convex sets.  A
natural question is whether the same is true for more general families
of convex sets.  Closedness is an obvious necessary condition for such
sets.  The statement is clearly false for arbitrary (in~fact, even
polyhedral) closed convex sets: if \(a \in \Reals^n\) has both
rational and irrational entries, then no rational closed halfspace
of~\(\Reals^n\) contains \(\setst{x \in \Reals^n}{\iprod{a}{x} \leq
  \beta}\), for~any \(\beta \in \Reals\).

In this short note, we generalize the result to pointed closed convex
sets, using elementary convex analysis.  We use standard notation
from~\cite{Rockafellar97a}, and we make extensive use of Minkowski set
operations.  The \emph{effective domain} of an extended real-valued
function \(f \colon \Reals^n \to \halfclosed{{-}\infty}{+\infty}\) is
\(\dom(f) \coloneqq \setst{x \in \Reals^n}{f(x) < +\infty}\).  Let \(C
\subseteq \Reals^n\) be a nonempty convex set.  The \emph{support
  function} of~\(C\) is \(\suppf{C}{a} \coloneqq \sup_{x \in C}
\iprod{a}{x} \in \halfclosed{{-}\infty}{+\infty}\) for each \(a \in
\Reals^n\), the \emph{barrier cone} of~\(C\) is \(B_C \coloneqq
\dom\paren*{\suppf{C}{\cdot}}\), the \emph{recession cone} of~\(C\) is
\(0^+C \coloneqq \setst{d \in \Reals^n}{\forall x \in C,\,x+\Reals_+d
  \subseteq C}\), and the \emph{polar} of~\(C\) is \(C^{\circ}
\coloneqq \setst{y \in \Reals^n}{\forall x\in C,\,\iprod{y}{x} \leq
  1}\).  The unit ball in~\(\Reals^n\) is \(\Ball \coloneqq \setst{x
  \in \Reals^n}{\norm{x} \leq 1}\).
\begin{lemma}
  \label{lem:barrier-cone-int}
  Let \(C \subseteq \Reals^n\) be a nonempty pointed closed convex
  set.  Then \(B_C\) has nonempty interior.
\end{lemma}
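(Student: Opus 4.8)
The plan is to derive the statement from the polarity between the recession cone \(0^+C\) and the barrier cone \(B_C\). Note first that \(B_C = \dom\paren*{\suppf{C}{\cdot}}\) is a convex cone containing the origin, since \(\suppf{C}{\cdot}\) is positively homogeneous and subadditive; hence \(B_C\) has nonempty interior exactly when it is full-dimensional. The structural fact I would use is that, for a nonempty closed convex set, \(0^+C\) is the polar of \(B_C\), that is, \(0^+C = (B_C)^{\circ}\); here the set-polar of the cone \(B_C\) coincides with \(\setst{d \in \Reals^n}{\iprod{a}{d} \le 0 \text{ for all } a \in B_C}\). One inclusion is immediate: if \(d \in 0^+C\) and \(a \in B_C\), then for every \(x \in C\) and every \(\lambda \in \Reals_+\) we have \(\iprod{a}{x} + \lambda\iprod{a}{d} = \iprod{a}{x+\lambda d} \le \suppf{C}{a} < +\infty\), which forces \(\iprod{a}{d} \le 0\). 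The reverse inclusion is the substantive one, and this is where I would use closedness of \(C\): writing \(C\) as the intersection of its supporting halfspaces \(\setst{x \in \Reals^n}{\iprod{a}{x} \le \suppf{C}{a}}\) over \(a \in B_C\), and invoking the standard fact (see~\cite{Rockafellar97a}) that the recession cone of an intersection of closed convex sets with nonempty intersection is the intersection of the recession cones. Taking polars once more and applying the bipolar theorem then gives \(\operatorname{cl}(B_C) = (0^+C)^{\circ}\).

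Next I would bring in pointedness. Since \(C\) is closed, it contains no line if and only if its lineality space \(0^+C \cap (-0^+C)\) equals \(\set{0}\), i.e.\ \(0^+C\) is a pointed cone. At this point I invoke the classical cone duality: for a closed convex cone \(K\) one has \(\operatorname{span}(K^{\circ}) = (K \cap (-K))^{\perp}\), so \(K^{\circ}\) is full-dimensional precisely when \(K\) is pointed. Applying this with \(K = 0^+C\) and \(K^{\circ} = \operatorname{cl}(B_C)\) shows that \(\operatorname{cl}(B_C)\) is full-dimensional.

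Finally, a convex set has the same affine hull, hence the same dimension, as its closure, so \(B_C\) is itself full-dimensional; being a full-dimensional convex set, it has nonempty interior, as desired. I expect the main obstacle to be the reverse inclusion \((B_C)^{\circ} \subseteq 0^+C\) (equivalently, that \((0^+C)^{\circ}\) equals \(\operatorname{cl}(B_C)\) rather than merely containing it), which is exactly the point where closedness of \(C\) is essential; the remaining steps are routine manipulations with cones and their polars, together with the mild care needed to transfer full-dimensionality from \(\operatorname{cl}(B_C)\) back to \(B_C\).
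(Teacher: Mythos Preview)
Your proposal is correct and follows essentially the same route as the paper: establish \(B_C^{\circ}=0^+C\) (the paper cites \cite[Corollary~14.2.1]{Rockafellar97a}, you sketch the argument), pass to \(\cl(B_C)=(0^+C)^{\circ}\) by bipolarity, use pointedness of \(0^+C\) to get full dimension of \((0^+C)^{\circ}\), and then transfer to \(B_C\). The only cosmetic difference is in the last step, where the paper uses \(\interior(B_C)=\interior(\cl(B_C))\) for convex sets while you argue via equality of affine hulls; both are standard.
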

\begin{proof}
  Clearly \(B_C\) is a convex cone containing the origin.  Then
  \(B_C^{\circ} = 0^+C\) by~\cite[Corollary~14.2.1]{Rockafellar97a}
  whence \(\cl(B_C) = (0^+C)^{\circ}\).  Since \(0^+C\) is pointed,
  \(n = \dim\paren{(0^+C)^{\circ}} = \dim(\cl(B_C))\) whence
  \(\interior(B_C) = \interior(\cl(B_C))\) is nonempty.
\end{proof}

\begin{lemma}
  \label{lema:cone-wedge-int}
  Let \(x_0,d \in \Reals^n\) and \(\eps,\delta > 0\).  Then
  \(\conv\paren{\set{x_0} \cup (d+\eps\Ball)} \cap (x_0 + \delta\kern
  .5pt\Ball)\) has nonempty interior.
\end{lemma}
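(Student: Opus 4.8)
The plan is to exhibit an honest ball of positive radius sitting inside the intersection, which immediately gives nonempty interior. The governing picture is that \(\conv\paren{\set{x_0} \cup (d + \eps\Ball)}\) is an \emph{ice-cream cone} with apex~\(x_0\) and spherical cap \(d + \eps\Ball\): sliding this cap toward the apex produces smaller and smaller balls, all still inside the cone, and one of them will be small enough to fit inside \(x_0 + \delta\Ball\) while keeping positive radius.

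To make this precise, for a scaling parameter \(t \in (0,1]\) I would consider the points \((1-t)x_0 + t(d + \eps b)\) with \(b \in \Ball\). Each is a convex combination of~\(x_0\) and a point of \(d + \eps\Ball\), hence lies in the convex hull; as \(b\) ranges over~\(\Ball\) these points trace out the ball \(x_0 + t(d - x_0) + t\eps\Ball\) of radius \(t\eps > 0\). This is the candidate ball. It remains to choose \(t\) so that this ball also lies inside \(x_0 + \delta\Ball\). Setting \(r \coloneqq \norm{d - x_0}\), the triangle inequality bounds the distance from~\(x_0\) of any point of the candidate ball by \(t(r + \eps)\); so taking \(t \coloneqq \min\set{1, \delta/(r + \eps)}\) forces both containments to hold simultaneously, and the open core of the candidate ball certifies nonempty interior.

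I expect no real obstacle here, as the statement is elementary. The single point deserving care is that the chosen~\(t\) must be strictly positive, which would fail if \(r + \eps\) could vanish; but \(r + \eps \geq \eps > 0\) always, so \(t > 0\) with no case analysis. In particular, the degenerate configuration \(d = x_0\)---where the cone collapses to \(x_0 + \eps\Ball\)---is handled uniformly, the candidate ball then being simply \(x_0 + t\eps\Ball\).
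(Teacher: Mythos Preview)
Your proposal is correct and is essentially the same argument as the paper's: both shrink the cap \(d+\eps\Ball\) toward the apex \(x_0\) by the scaling factor \(\min\set{1,\delta/(\norm{d-x_0}+\eps)}\) to obtain a ball contained in the intersection, and both note this factor is positive because \(\eps>0\). The only cosmetic difference is your choice of names (\(t\), \(r\)) versus the paper's (\(\lambda\), \(\norm{d-x_0}\)).
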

\begin{proof}
  Set \(\lambda \coloneqq
  \min\curly[\big]{\nicefrac{\delta}{(\norm{d-x_0}+\eps)}, 1 } \in
  \halfclosed{0}{1}\).  Then \(X \coloneqq (1-\lambda)x_0 +
  \lambda(d+\eps\Ball) \subseteq \conv\paren*{\set{x_0} \cup (d +
    \eps\Ball)}\) and the inclusion \(X \subseteq x_0 + \delta\kern
  .5pt \Ball\) is equivalent to \(\norm{\lambda(d+\eps u-x_0)} \leq
  \delta\) for each \(u \in \Ball\), which holds by the definition
  of~\(\lambda\).  Since \(\lambda > 0\) and \(\eps > 0\), it follows
  that \(\interior(X) \neq \emptyset\).
\end{proof}

\begin{theorem}
  \label{thm:main}
  Every pointed closed convex set is the intersection of all rational
  closed halfspaces that contain~it.
\end{theorem}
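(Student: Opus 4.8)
The plan is to prove the reverse inclusion directly: since \(C\) is contained in every rational closed halfspace containing it, it suffices to show that each point \(\bar{x} \in \Reals^n \setminus C\) is excluded by some rational closed halfspace that still contains~\(C\). I may assume \(C \neq \emptyset\) (for \(C = \emptyset\) two opposite rational halfspaces already have empty intersection). Since \(C\) is closed and convex and \(\bar{x} \notin C\), the strong separation theorem yields \(a \in \Reals^n\) and \(\beta \in \Reals\) with \(\iprod{a}{x} \leq \beta\) for all \(x \in C\) and \(\iprod{a}{\bar{x}} > \beta\). In particular \(\suppf{C}{a} \leq \beta < \iprod{a}{\bar{x}}\), so \(a \in B_C\) and necessarily \(a \neq 0\).

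The difficulty is that \(a\) need not be rational, and a naive rational perturbation \(a'\) could fall outside \(B_C\), forcing \(\suppf{C}{a'} = +\infty\); this is exactly the obstruction that unboundedness creates and that pointedness must overcome. To sidestep it, I would first push \(a\) into \(\interior(B_C)\). By Lemma~\ref{lem:barrier-cone-int}, \(\interior(B_C) \neq \emptyset\); fix \(a_0 \in \interior(B_C)\) and set \(a_t \coloneqq (1-t)a + t\,a_0\), so that convexity of \(B_C\) gives \(a_t \in \interior(B_C)\) for every \(t \in \halfclosed{0}{1}\) with \(t > 0\). Combining \(\suppf{C}{a_t} \leq (1-t)\suppf{C}{a} + t\,\suppf{C}{a_0}\) (convexity of the support function, both values finite) with \(\iprod{a_t}{\bar{x}} = (1-t)\iprod{a}{\bar{x}} + t\,\iprod{a_0}{\bar{x}}\), the quantity \(\iprod{a_t}{\bar{x}} - \suppf{C}{a_t}\) is bounded below by \((1-t)\bigl(\iprod{a}{\bar{x}} - \suppf{C}{a}\bigr) + t\bigl(\iprod{a_0}{\bar{x}} - \suppf{C}{a_0}\bigr)\), an affine function of \(t\) that is strictly positive at \(t = 0\). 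Hence for all sufficiently small \(t > 0\) the direction \(a_t \in \interior(B_C)\) still strictly separates \(\bar{x}\) from \(C\); fix one such \(a_t\) and relabel it \(a\).

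Now \(a \in \interior(B_C)\) and \(\iprod{a}{\bar{x}} > \suppf{C}{a}\). The support function \(\suppf{C}{\cdot}\) is proper, closed, and convex with effective domain \(B_C\), so it is continuous on \(\interior(B_C)\) by~\cite[Theorem~10.1]{Rockafellar97a}; the map \(y \mapsto \iprod{y}{\bar{x}}\) is continuous as well. Therefore I can choose \(a' \in \Rationals^n \setminus \set{0}\) close enough to \(a\) that \(a' \in \interior(B_C)\) (so \(\suppf{C}{a'} < +\infty\)) and the strict inequality \(\iprod{a'}{\bar{x}} > \suppf{C}{a'}\) persists. Density of \(\Rationals\) in \(\Reals\) then gives \(\beta' \in \Rationals\) with \(\suppf{C}{a'} < \beta' < \iprod{a'}{\bar{x}}\), and \(\setst{x \in \Reals^n}{\iprod{a'}{x} \leq \beta'}\) is a rational closed halfspace containing \(C\) (since \(\suppf{C}{a'} \leq \beta'\)) but excluding \(\bar{x}\).

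The crux of the argument—and the only place pointedness enters—is the passage into \(\interior(B_C)\): rationalizing the separating direction is safe only where the support function is finite and continuous, namely on \(\interior(B_C)\), and Lemma~\ref{lem:barrier-cone-int} guarantees this interior is nonempty. On this route I would not need Lemma~\ref{lema:cone-wedge-int}, which appears tailored to a more explicit, truncation-based construction of interior points rather than to the support-function perturbation used above.
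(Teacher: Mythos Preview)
Your argument is correct and, in outline, parallels the paper's: both reduce to finding a rational separating direction, both invoke Lemma~\ref{lem:barrier-cone-int} to obtain \(\interior(B_C)\neq\emptyset\), and both exploit continuity of \(\suppf{C}{\cdot}\) on that interior so that a rational perturbation still separates. The difference lies in how the separating direction is pushed into \(\interior(B_C)\). The paper works with the explicit projection direction \(\bar y\) (for which \(\suppf{C}{\bar y}=0\)), fixes a ball \(d+\eps\Ball\subseteq\interior(B_C)\) together with a Lipschitz bound for \(\suppf{C}{\cdot}\) on it, rescales, and then uses Lemma~\ref{lema:cone-wedge-int} to exhibit a full-dimensional wedge \(A\) of directions all of which separate; a rational point of \(A\) finishes the proof. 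You instead take an abstract separating \(a\in B_C\), slide it along the segment toward a fixed \(a_0\in\interior(B_C)\) so that \(a_t\in\interior(B_C)\) for all \(t\in(0,1]\) (Rockafellar, Theorem~6.1), observe that the affine lower bound for \(\iprod{a_t}{\bar x}-\suppf{C}{a_t}\) stays positive for small \(t\), and then perturb to a nearby rational using continuity on \(\interior(B_C)\). Your route is shorter and, as you note, dispenses with Lemma~\ref{lema:cone-wedge-int} and the explicit Lipschitz estimate; the paper's construction, on the other hand, produces an entire open set of valid directions with quantitative separation margins, which can be useful if one cares about stability of the rational approximation.
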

\begin{proof}
  \newcommand*{\Ctilde}{\widetilde{C}}
  Let \(X \subseteq \Reals^n\) be a pointed closed convex set.  We may
  assume that \(X \neq \emptyset\).  Clearly \(X\) is contained in the
  intersection of all rational closed halfspaces that contain~\(X\).
  Hence, it suffices to prove that for each \(\ytilde \in \Reals^n
  \setminus X\), there are \(a \in \Rationals^n\) and \(\beta \in
  \Rationals\) such that \(\iprod{a}{x} \leq \beta\) for each \(x \in
  X\) and \(\iprod{a}{\ytilde} > \beta\).  So let \(\ytilde \in
  \Reals^n \setminus X\).  Since~\(\Rationals\) is dense
  in~\(\Reals\), it suffices to prove that
  \begin{equation}
    \label{eq:1}
    \text{%
      there exists \(a \in \Rationals^n\) such that
      \(\suppf{X}{a} < \iprod{a}{\ytilde}\).
    }
  \end{equation}

  Let \(\ztilde \in X\) be the metric projection of~\(\ytilde\)
  in~\(X\), i.e., \(\set{\ztilde} = \argmin_{z \in X}
  \norm{z-\ytilde}\).  Set \(C \coloneqq X - \ztilde\) and \(\ybar
  \coloneqq \ytilde - \ztilde \neq 0\).  Then \(0 \in C\) and
  \begin{equation}
    \label{eq:3}
    \suppf{C}{a} \geq 0
    \qquad
    \text{for each \(a \in \Reals^n\), with equality if \(a = \ybar\)}.
  \end{equation}
  By \Cref{lem:barrier-cone-int}, there are \(d \in B_C\) and \(\eps >
  0\) such that the compact set \(d + \eps\Ball\) is a subset
  of~\(\interior(B_C)\).  Hence, \(\suppf{C}{\cdot}\) is Lipschitz
  continuous on \(d + \eps\Ball\) with Lipschitz constant at most \(L
  > 0\); see, e.g., \cite[Theorem~10.4]{Rockafellar97a}.  In
  particular,
  \begin{equation}
    \label{eq:4}
    \suppf{C}{d+\eps u} \leq \suppf{C}{d} + L\eps
    \qquad
    \forall u \in \Ball.
  \end{equation}
  Set
  \begin{gather*}
    \alpha \coloneqq
    \frac{\tfrac{1}{3}\norm{\ybar}^2}{\suppf{C}{d}+L\eps} > 0,
    \qquad
    \dbar \coloneqq \alpha d,
    \qquad
    \epsbar \coloneqq \alpha \eps > 0,
    \qquad
    \bar{\delta} \coloneqq \tfrac{1}{3}\norm{\ybar} > 0,
    \\[3pt]
    A \coloneqq \conv\paren*{\set{\ybar} \cup (\dbar + \epsbar\kern .8pt\Ball)}
    \cap (\ybar + \bar{\delta}\kern 1pt\Ball).
  \end{gather*}
  We claim that,
  \begin{equation}
    \label{eq:7}
    \suppf{C}{a} < \iprod{a}{\ybar}
    \qquad
    \forall a \in A.
  \end{equation}
  Let \(a \in A\).  So there exist \(\lambda \in [0,1]\) and \(\ubar
  \in \Ball\) such that \(a = (1-\lambda)\ybar +
  \lambda(\dbar+\epsbar\ubar)\).  Then
  \begin{equation}
    \label{eq:5}
    \suppf{C}{a}
    \leq
    (1-\lambda) \suppf{C}{\ybar} +
    \lambda\suppf{C}{\dbar+\epsbar\ubar}
    =
    \lambda\alpha\suppf{C}{d+\eps\ubar}
    \leq
    \tfrac{1}{3}\norm{\ybar}^2,
  \end{equation}
  where we used~\eqref{eq:3}, \eqref{eq:4}, and the fact that \(\lambda
  \leq 1\).  On the other hand, \(a = \ybar + \bar{\delta}\vbar\) for
  some \(\vbar \in \Ball\), so
  \begin{equation}
    \label{eq:6}
    \iprod{a}{\ybar}
    =
    \norm{\ybar}^2 + \bar{\delta}\iprod{\vbar}{\ybar}
    \geq
    \norm{\ybar}\paren*{\norm{\ybar} - \bar{\delta}}
    =
    \tfrac{2}{3}\norm{\ybar}^2.
  \end{equation}
  Combining~\eqref{eq:5} and~\eqref{eq:6} yields~\eqref{eq:7}.

  By adding \(\iprod{a}{\ztilde}\) to both sides of~\eqref{eq:7}, we
  find that \(\suppf{X}{a} < \iprod{a}{\ytilde}\) for each \(a \in
  A\).  By \Cref{lema:cone-wedge-int}, we~have \(\interior(A) \neq
  \emptyset\).  Hence, there exists a rational vector \(a\) in~\(A\).
  This proves~\eqref{eq:1} and the proof of the theorem is complete.
\end{proof}

The result is tight due to existence of closed halfspaces that are
contained in no rational closed halfspace, as~discussed~above.  Even
though \Cref{thm:main} does not directly yield a generalization of the
notion of total dual integrality in~\cite{CarliT18a} for pointed
closed convex sets (due to limitations of the Gomory-Chvátal closure),
the \namecref{thm:main} does provide a natural generalization of our
previous, foundational result for compact convex sets.


\begin{thebibliography}{10}

\bibitem{CarliT18a}
M.~K. de Carli Silva and L.~Tunçel.
\newblock {\em A Notion of Total Dual Integrality for Convex, Semidefinite,
and Extended Formulations}.
\newblock Version~1.  January~2018.  arXiv:
\href{https://arxiv.org/abs/1801.09155}{1801.09155 [math.OC]}

\bibitem{Rockafellar97a}
R.~T. Rockafellar.
\newblock {\em Convex Analysis}.
\newblock Princeton University Press, Princeton, NJ, 1997.

\bibitem{Schrijver86a}
A.~Schrijver.
\newblock {\em Theory of linear and integer programming}.
\newblock Wiley-Interscience Series in Discrete Mathematics. John Wiley \& Sons
  Ltd., Chichester, 1986.
\newblock A Wiley-Interscience Publication.

\bibitem{Schrijver03a}
A.~Schrijver.
\newblock {\em Combinatorial optimization}, volume~24.
\newblock Springer-Verlag, Berlin, 2003.

\end{thebibliography}
\end{document}